\documentclass[12pt,reqno]{article}

\usepackage[usenames]{color}
\usepackage[colorlinks=true,
linkcolor=webgreen, filecolor=webbrown,
citecolor=webgreen]{hyperref}

\definecolor{webgreen}{rgb}{0,.5,0}
\definecolor{webbrown}{rgb}{.6,0,0}

\usepackage{amsmath}
\usepackage{amssymb}
\usepackage{mathtools}
\usepackage{graphicx}
\usepackage{amscd}
\usepackage{lscape}
\usepackage{tikz}
\usepackage{tikz-cd}
\usepackage{pgfplots}

\usetikzlibrary{matrix}
\usetikzlibrary{fit,shapes}
\usetikzlibrary{positioning, calc}
\tikzset{circle node/.style = {circle,inner sep=1pt,draw, fill=white},
        X node/.style = {fill=white, inner sep=1pt},
        dot node/.style = {circle, draw, inner sep=5pt}
        }
\usepackage{tkz-fct}

\usepackage{amsthm}
\newtheorem{theorem}{Theorem}

\newtheorem{proposition}[theorem]{Proposition}
\newtheorem{corollary}[theorem]{Corollary}

\theoremstyle{definition}

\newtheorem{example}[theorem]{Example}

\usepackage{float}

\usepackage{graphics,amsmath}
\usepackage{amsfonts}
\usepackage{latexsym}
\usepackage{epsf}

\newcommand{\seqnum}[1]{\href{http://oeis.org/#1}{\underline{#1}}}

\begin{document}

\begin{center}
\vskip 1cm{\LARGE\bf Square roots in the Appell group and Sprugnoli arrays} \vskip 1cm \large
Paul Barry\\
School of Science\\
South East Technological University\\
Ireland\\
\href{mailto:pbarry@wit.ie}{\tt pbarry@wit.ie}
\end{center}
\vskip .2 in

\begin{abstract} We introduce a special mapping from pairs of power series to the group of Sprugnoli matrices. This mapping has the property when the second argument is an even power series, then the square of the resulting Sprugnoli array is an aerated element of the Appell subgroup of the Riordan group. This allows us to explore the square roots of elements in the aerated Appell subgroup. As the identity is an element of this subgroup, we are led to explore related involutions in the Sprugnoli group.
\end{abstract}

\section{Introduction}
In \cite{SQ}, Merlini investigates the square root of a Bell matrix, which is a matrix of the form $(g(x), xg(x))$ in the Riordan group \cite{book1, book2, SGWW}. In this note, we shall investigate the square root of certain elements of the aerated Appell subgroup of the Riordan group. The special feature of these square roots is that they are Sprugnoli arrays, elements of the Sprugnoli group \cite{Spru}.
For this, we define 
$$\mathcal{F}_r =\{h(x) \in \mathbb{C}[[x]] \,|\, h(x)=\sum_{i=r}^{\infty} h_i x^i\}.$$
For $(g,f) \in \mathcal{F}_0 \times \mathcal{F}_0$, we define $M(g,f)$ to be the matrix with generating function given by 
$$\frac{g(x)}{(1-x)(1+yx)}+\frac{xf(x)}{(1-x^2)(1-yx)}.$$ We shall show that $M(g,f)$ is a Sprugnoli array, and moreover, when $f(x)$ is an even power series, that $M(g,f)^2$ is a member of the Appell subgroup of the Riordan group. In fact, it is an aerated element of the Appell subgroup. For certain pairs $(g,f)$, we have $M(g,f)^2=I$, the identity matrix. Thus in such a case, $M(g,f)$ represents an involution in the Sprugnoli group.

We shall prove these assertions after we have laid some groundwork by explaining the necessary facts about the Riordan group, its subgroup of Appell arrays, and the necessary facts about the Sprugnoli group. The structure of this note is as follows.
\begin{enumerate}
\item This introduction.
\item Examples.
\item The Riordan group $R$ and its subgroup of Appell arrays.
\item The Sprugnoli group $S$.
\item The mapping $\mathcal{F}_0 \times \mathcal{F}_0 \longrightarrow S$.
\item Squaring the Sprugnoli array $M(g,f)$. 
\item Involutions in the Sprugnoli group.
\item A Riordan group interpretation of the special mapping.
\item Conclusions.
\end{enumerate}
Note that where integer sequences occur in this note that have entries in the On-Line Encyclopedia of Integer Sequences (OEIS) \cite{SL1, SL2}, we refer to them by their OEIS reference. 
\section{Examples}
In this section, we use truncations of the infinite matrices involved to illustrate the notions that will be explored in this note. In all cases, the matrices on the left are Sprugnoli arrays, while those on the right are aerated elements of the Appell subgroup of the Riordan group.
\begin{example}
\[
\begin{pmatrix}
1 & 0 & 0 & 0 & 0 & 0 & 0 \\
2 & -1 & 0 & 0 & 0 & 0 & 0 \\
\frac12 & 0 & 1 & 0 & 0 & 0 & 0 \\
3 & -\frac12 & 2 & -1 & 0 & 0 & 0 \\
-\frac{9}{8} & 2 & \frac12 & 0 & 1 & 0 & 0 \\
\frac{15}{4} & \frac{9}{8} & 3 & -\frac12 & 2 & -1 & 0 \\
-\frac{111}{16} & 6 & -\frac{9}{8} & 2 & \frac12 & 0 & 1
\end{pmatrix}^2=\begin{pmatrix}
1 & 0 & 0 & 0 & 0 & 0 & 0 \\
0 & 1 & 0 & 0 & 0 & 0 & 0 \\
1 & 0 & 1 & 0 & 0 & 0 & 0 \\
0 & 1 & 0 & 1 & 0 & 0 & 0 \\
2 & 0 & 1 & 0 & 1 & 0 & 0 \\
0 & 2 & 0 & 1 & 0 & 1 & 0 \\
3 & 0 & 2 & 0 & 1 & 0 & 1
\end{pmatrix}
\]
The array on the right is a representation of the Appell array $\left(\frac{1}{1-x^2-x^4}, x\right)$, defined by the aerated Fibonacci numbers.
\end{example}
\begin{example} In this example, we let $r \in \mathbb{C}$. We have 
\begin{scriptsize}
\[
\begin{pmatrix}
1 & 0 & 0 & 0 & 0 & 0 & 0 \\
-r + 3 + i & -1 & 0 & 0 & 0 & 0 & 0 \\
-2(r-1) + i(2-r) & r-1-i & 1 & 0 & 0 & 0 & 0 \\
-2r + 3 + 2i(1-r) & 2(r-1) + i(r-2) & -r + 3 + i & -1 & 0 & 0 & 0 \\
-2r + 3 + 2i(1-r) & 2r - 1 + 2i(r-1) & -2(r-1) + i(2-r) & r-1-i & 1 & 0 & 0 \\
-3r + 5 + i(3-2r) & 2r - 3 + 2i(r-1) & -2r + 3 + 2i(1-r) & 2(r-1) + i(r-2) & -r + 3 + i & -1 & 0 \\
-4(r-1) + i(4-3r) & 3(r-1) + i(2r-3) & -2r + 3 + 2i(1-r) & 2r - 1 + 2i(r-1) & -2(r-1) + i(2-r) & r-1-i & 1
\end{pmatrix}^2
\]\end{scriptsize}
\[=
\begin{pmatrix}
1 & 0 & 0 & 0 & 0 & 0 & 0 \\
0 & 1 & 0 & 0 & 0 & 0 & 0 \\
2 - r^{2} & 0 & 1 & 0 & 0 & 0 & 0 \\
0 & 2 - r^{2} & 0 & 1 & 0 & 0 & 0 \\
4 - r^{2} & 0 & 2 - r^{2} & 0 & 1 & 0 & 0 \\
0 & 4 - r^{2} & 0 & 2 - r^{2} & 0 & 1 & 0 \\
5 - 2r^{2} & 0 & 4 - r^{2} & 0 & 2 - r^{2} & 0 & 1
\end{pmatrix}.
\]
This is the aerated Appell matrix $\left(\frac{1+x^2+x^4}{(1-x^2)(1-x^4)}-\frac{r^2x^2(1+x^2)}{(1-x^4)^2},x\right)$. When $r=\sqrt{3}$, we obtain the matrix
\[
\begin{pmatrix}
1 & 0 & 0 & 0 & 0 & 0 & 0 \\
0 & 1 & 0 & 0 & 0 & 0 & 0 \\
-1 & 0 & 1 & 0 & 0 & 0 & 0 \\
0 & -1 & 0 & 1 & 0 & 0 & 0 \\
1 & 0 & -1 & 0 & 1 & 0 & 0 \\
0 & 1 & 0 & -1 & 0 & 1 & 0 \\
-1 & 0 & 1 & 0 & -1 & 0 & 1
\end{pmatrix},
\]
which is the Appell matrix $\left(\frac{1}{1+x^2}, x\right)$.

\end{example}
\begin{example} 
We have 
\[
\begin{pmatrix}
1 & 0 & 0 & 0 & 0 & 0 & 0 \\
2 & -1 & 0 & 0 & 0 & 0 & 0 \\
2 & 0 & 1 & 0 & 0 & 0 & 0 \\
4 & -2 & 2 & -1 & 0 & 0 & 0 \\
3 & 0 & 2 & 0 & 1 & 0 & 0 \\
6 & -3 & 4 & -2 & 2 & -1 & 0 \\
4 & 0 & 3 & 0 & 2 & 0 & 1
\end{pmatrix}^2=\begin{pmatrix}
1 & 0 & 0 & 0 & 0 & 0 & 0 \\
0 & 1 & 0 & 0 & 0 & 0 & 0 \\
4 & 0 & 1 & 0 & 0 & 0 & 0 \\
0 & 4 & 0 & 1 & 0 & 0 & 0 \\
10 & 0 & 4 & 0 & 1 & 0 & 0 \\
0 & 10 & 0 & 4 & 0 & 1 & 0 \\
20 & 0 & 10 & 0 & 4 & 0 & 1
\end{pmatrix}
\]
This is the aerated Appell array $\left(\frac{1}{(1-x^2)^4},x\right)$.
\end{example}

\section{The Riordan group}
We recall that $\mathcal{F}_0 = \{ \sum_{n=0}^{\infty}a_n x^n\,|\, a_0 \ne 0\}$ and that $\mathcal{F}_1 = \{ \sum_{n=0}^{\infty}a_n x^n\,|\, a_0 =0, a_1 \ne 0\}$ and in general
$$\mathcal{F}_r = \{\sum_{n=r}^{\infty}a_n x^n\,|\, a_r \ne 0\}.$$ Elements of $\mathcal{F}_0$ are multiplicatively invertible, and elements of $\mathcal{F}_1$ are compositionally invertible, given suitable ground rings $R$ for $a_n \in R$. In the sequel we shall always take this ring to be the field of complex numbers $\mathbb{C}$, and we shall reserve the symbol $R$ for the Riordan group thus defined.

The Riordan group is then the group of pairs $(g, f) \in \mathcal{F}_0 \times \mathcal{F}_1$ with the following product rule
\begin{equation}\label{product}(g(x), f(x))\cdot (u(x), v(x))=  (g(x)u(f(x)), v(f(x))\end{equation} and inverse
$$(g, f)^{-1}= \left(\frac{1}{g(\bar{f})}, \bar{f}(x)\right),$$ where $\bar{f}$ is the compositional inverse of $f \in \mathcal{F}_0$ (that is, $\bar{f}$ is the solution $u$ of the equation $f(u)=x$ for which $u(0)=0$). The identity of this group is $(1,x)$. To each element of this group we can associate in a unique way a lower-triangular matrix $(t_{n,k})$ with entries in the ground ring (in this case $\mathbb{C}$) by means of
$$t_{n,k}=[x^n] g(x)f(x)^k,$$ where $[x^n]$ is the functional on $\mathbb{C}[[x]]$ that extracts the coefficient of $x^n$. Under this correspondence, the product (\ref{product}) corresponds to ordinary matrix multiplication. The columns of $(g,f)$ have their generating functions given by the geometric series of power series
$$g, gf, gf^2, gf^3, gf^4, \ldots.$$ The bi-variate generating function of the Riordan array $(g(x), f(x))$ is given by $\frac{g(x)}{1-yf(x)}$.
\begin{example} The Riordan array $(g(x),f(x))=\left(\frac{1}{1-x}, \frac{x}{(1-x)^2}\right)$ \seqnum{A085478} begins
$$\left(\begin{array}{ccccccc}
1 & 0 & 0 & 0 & 0 & 0 & 0 \\
1 & 1 & 0 & 0 & 0 & 0 & 0 \\
1 & 3 & 1 & 0 & 0 & 0 & 0 \\
1 & 6 & 5 & 1 & 0 & 0 & 0 \\
1 & 10 & 15 & 7 & 1 & 0 & 0 \\
1 & 15 & 35 & 28 & 9 & 1 & 0 \\
1 & 21 & 70 & 84 & 45 & 11 & 1\\
\end{array}\right).$$
We have $t_{n,k}=\binom{n+k}{2k}$.
The generating function of this array is given by
$$\frac{\frac{1}{1-x}}{1-y\frac{x}{(1-x)^2}}=\frac{1-x}{1-x(y+2)+x^2}.$$
When $y=1$, we obtain the generating function $\frac{1-x}{1-3x+x^2}$ of the row sums of this matrix. Thus the row sums begin
$$1, 2, 5, 13, 34, 89, 233, 610, 1597, 4181, 10946,\ldots,$$ or $F_{2n+1}$ \seqnum{A122367}.
\end{example}  The product law follows from the following result, called the ``fundamental theorem of Riordan arrays'', which details how a Riordan array operates on a power series. We have
$$(g(x), f(x))\cdot h(x)= g(x)h(f(x)).$$ This is sometimes called a weighted composition. 

The Appell subgroup of the Riordan group is the set of elements $(g(x),x)$ with $g(x) \in \mathcal{F}_0$. This subgroup itself has a subgroup, the group of ``aerated'' Appell arrays $(g(x^2), x)$ where $g(x) \in \mathcal{F}_0$. We note that the unit of the Riordan group is an element of these subgroups. 
\begin{example} The Riordan array $M=\left(\frac{1}{1-x}, -\frac{x}{1-x}\right)$ has general element $(-1)^k \binom{n}{k}$. We have $M^2=I$. We say that $M$ is an involution (element of order $2$) in the Riordan group.

\[
\begin{pmatrix}
1 & 0 & 0 & 0 & 0 & 0 & 0 \\
1 & -1 & 0 & 0 & 0 & 0 & 0 \\
1 & -2 & 1 & 0 & 0 & 0 & 0 \\
1 & -3 & 3 & -1 & 0 & 0 & 0 \\
1 & -4 & 6 & -4 & 1 & 0 & 0 \\
1 & -5 & 10 & -10 & 5 & -1 & 0 \\
1 & -6 & 15 & -20 & 15 & -6 & 1
\end{pmatrix}^2=\begin{pmatrix}
1 & 0 & 0 & 0 & 0 & 0 & 0 \\
0 & 1 & 0 & 0 & 0 & 0 & 0 \\
0 & 0 & 1 & 0 & 0 & 0 & 0 \\
0 & 0 & 0 & 1 & 0 & 0 & 0 \\
0 & 0 & 0 & 0 & 1 & 0 & 0 \\
0 & 0 & 0 & 0 & 0 & 1 & 0 \\
0 & 0 & 0 & 0 & 0 & 0 & 1
\end{pmatrix}.
\]
\end{example}

\begin{example} The matrix that begins 
\[
\begin{pmatrix}
1 & 0 & 0 & 0 & 0 & 0 & 0 \\
1 & 0 & 0 & 0 & 0 & 0 & 0 \\
1 & 1 & 0 & 0 & 0 & 0 & 0 \\
1 & 2 & 0 & 0 & 0 & 0 & 0 \\
1 & 3 & 1 & 0 & 0 & 0 & 0 \\
1 & 4 & 3 & 0 & 0 & 0 & 0 \\
1 & 5 & 6 & 1 & 0 & 0 & 0
\end{pmatrix}
\]
is an example of a ``stretched'' Riordan array, which can be represented by the pair $\left(\frac{1}{1-x}, \frac{x^2}{1-x}\right)$. Stretched Riordan arrays are defined by $(g,f) \in \mathcal{F}_0 \times \mathcal{F}_2$, where again the general $(n,k)$ element is given by 
$t_{n,k}=[x^n] g(x)f(x)^k$. 
\end{example}

\section{The Sprugnoli Group}
The \emph{Sprugnoli group} $S$ is the set with elements $(g, f_1, f_2)$ with $g(x) \in \mathcal{F}_0$, $f_1(x) \in \mathcal{F}_1$, and $f_2(x) \in \mathcal{F}_1$ and
$f_2 \in xR[[x^2]]$ (thus $f_2$ is an odd power series). The element $(g, f_1, f_2)$ of this group has the matrix representation
$$t_{n,k} = [x^n] g(x)f_1(x)^{k \bmod 2} (xf_2(x))^{\lfloor \frac{k}{2} \rfloor}.$$ (We take the coefficient ring of power series to be the field of complex numbers $\mathbb{C}$).

We have
$$
t_{n,k} =
\begin{cases}
  [x^n]g(x)(xf_2(x))^m, & k=2m, \\
  [x^n]g(x)f_1(x)(xf_2(x))^m,  & k=2m+1.
\end{cases}
$$
The columns of this matrix then have generating functions given by the following products of generating functions
\begin{align*}g(x), g(x)f_1(x),&\, g(x)(xf_2(x)), g(x)f_1(x)(xf_2(x)), g(x)(xf_2(x))^2,\\
& g(x)f_1(x)(xf_2(x))^2, g(x)(xf_2(x))^3, g(x)f_1(x)(xf_2(x))^3,\ldots\end{align*} We can represent this sequence by the following schema.
$$
\begin{array}{cccccccc}
g &g & g & g & g & g & g & g\\
1 &f_1 & 1 & f_1 & 1 & f_1 & 1 & f_1 \\
1 &1 & f_2 & f_2 & f_2^2 & f_2^2 & f_2^3 & f_2^3 \\
1 & 1 & x & x & x^2 & x^2 & x^3 & x^3 \\
- & - & - & - & - & - & - & -\\
0 & 1 & 2 & 3 & 4 & 5 & 6 & 7 \\
\end{array}
$$ Here, the final row gives the order of the product of the terms above it, showing that this associated matrix is lower-triangular. We can represent this array as the sum of two matrices as follows. The first matrix is the matrix whose columns are generated by the power series
$$g(x), 0, g(x)(xf_2(x)), 0, g(x)(xf_2(x))^2, \ldots,$$ which is a horizontal aeration of the stretched Riordan array $(g(x), xf_2(x))$, and the second matrix is the matrix whose columns are generated by the power series
$$0,g(x)f_1(x),0, g(x)f_1(x)(xf_2(x)), 0, g(x)f_1(x)(xf_2(x))^2,0,\ldots,$$ which is the horizontal aeration of the stretched Riordan array $(g(x)f_1(x), xf_2(x))$ (but note that this matrix starts with a zero row, due to the fact that $f_1 \in \mathcal{F}_1$).
\begin{example} We consider the Sprugnoli array $\left(\frac{1}{1-x}, \frac{x(1+x)}{1-x}, \frac{x}{1-x^2}\right)$. This array begins
$$\left(\begin{array}{ccccccccc}
1 & 0 & 0 & 0 & 0 & 0 & 0 & 0 & 0 \\
1 & 1 & 0 & 0 & 0 & 0 & 0 & 0 & 0 \\
1 & 3 & 1 & 0 & 0 & 0 & 0 & 0 & 0 \\
1 & 5 & 1 & 1 & 0 & 0 & 0 & 0 & 0 \\
1 & 7 & 2 & 3 & 1 & 0 & 0 & 0 & 0 \\
1 & 9 & 2 & 6 & 1 & 1 & 0 & 0 & 0 \\
1 & 11 & 3 & 10 & 3 & 3 & 1 & 0 & 0 \\
1 & 13 & 3 & 15 & 3 & 7 & 1 & 1 & 0 \\
1 & 15 & 4 & 21 & 6 & 13 & 4 & 3 & 1
\end{array}\right).$$ This is the sum of the matrices
$$\left(\begin{array}{ccccccccc}
1 & 0 & 0 & 0 & 0 & 0 & 0 & 0 & 0 \\
1 & 0 & 0 & 0 & 0 & 0 & 0 & 0 & 0 \\
1 & 0 & 1 & 0 & 0 & 0 & 0 & 0 & 0 \\
1 & 0 & 1 & 0 & 0 & 0 & 0 & 0 & 0 \\
1 & 0 & 2 & 0 & 1 & 0 & 0 & 0 & 0 \\
1 & 0 & 2 & 0 & 1 & 0 & 0 & 0 & 0 \\
1 & 0 & 3 & 0 & 3 & 0 & 1 & 0 & 0 \\
1 & 0 & 3 & 0 & 3 & 0 & 1 & 0 & 0 \\
1 & 0 & 4 & 0 & 6 & 0 & 4 & 0 & 1
\end{array}\right)+
\left(\begin{array}{ccccccccc}
0 & 0 & 0 & 0 & 0 & 0 & 0 & 0 & 0 \\
0 & 1 & 0 & 0 & 0 & 0 & 0 & 0 & 0 \\
0 & 3 & 0 & 0 & 0 & 0 & 0 & 0 & 0 \\
0 & 5 & 0 & 1 & 0 & 0 & 0 & 0 & 0 \\
0 & 7 & 0 & 3 & 0 & 0 & 0 & 0 & 0 \\
0 & 9 & 0 & 6 & 0 & 1 & 0 & 0 & 0 \\
0 & 11 & 0 & 10 & 0 & 3 & 0 & 0 & 0 \\
0 & 13 & 0 & 15 & 0 & 7 & 0 & 1 & 0 \\
0 & 15 & 0 & 21 & 0 & 13 & 0 & 3 & 0
\end{array}\right).$$
The first is a horizontal aeration of the stretched Riordan array $\left(\frac{1}{1-x},\frac{x^2}{1-x^2}\right)$, while the second one
is a horizontal aeration of the stretched Riordan array $\left(\frac{(1+x)}{(1-x)^2},\frac{x^2}{1-x^2}\right)$ with an extra initial column of zeros.
\end{example}
A special element of this set is given by $(1,x,x)$. Its columns are generated by the sequence $1,x,x^2,x^3,\ldots$ and so its matrix representation is  given by the usual identity matrix.

The bivariate generating function of the Sprugnoli array $(g, f_1, f_2)$ is given by
$$\frac{g(x)}{1-y^2xf_2(x)}+\frac{yg(x)f_1(x)}{1-y^2xf_2(x)}=\frac{g(x)(1+yf_1(x))}{1-y^2xf_2(x)}.$$
Note in particular that if the Sprugnoli array is of the form $(g(x), f_1(x), x)$ then its bivariate generating function is of the form 
$\frac{g(x)(1+yf_1(x)}{1-y^2x^2}$. 

The row sums of the Sprugnoli array $(g, f_1, f_2)$ have generating function $\frac{g(x)(1+f_1(x))}{1-xf_2(x)}$.
This results by setting $y=1$ in the generating function of the array.

The bivariate generating function of the Sprugnoli array $(g(x),x,x)$ is given by 
$$\frac{g(x)(1+yx)}{1-y^2x^2}=\frac{g(x)}{1-yx},$$ which is the generating function of the Appell array $(g(x),x)$. Thus the Sprugnoli array $(g(x),x,x)$ is also a Riordan array, namely the Appell array $(g(x),x)$.

The ``Fundamental theorem of Sprugnoli arrays'' details how a Sprugnoli array operates on a power series. We have
\[
(g(x), f_1(x), f_2(x))\cdot h(x)=g(x)h^e(xf_2(x))+g(x)f_1(x)h^o(xf_2(x)).
\]
Here, \begin{align*}h^e(x)&=\frac{h(\sqrt{x})+h(-\sqrt{x})}{2},\\
h^o(x)&=\frac{h(\sqrt{x})-h(-\sqrt{x})}{2\sqrt{x}}\end{align*}
are the even and odd bisections of $h(x)$.

We use this result to define the product of two Sprugnoli arrays.
$$(g,f_1,f_2)\cdot(u,v_1,v_2)=\left((g,f_1,f_2)\cdot u, \frac{(g,f_1,f_2)\cdot uv_1}{(g,f_1,f_2)\cdot u}, \frac{1}{x}\frac{(g,f_1,f_2)\cdot uxv_2}{(g,f_1,f_2)\cdot u}\right).$$

The inverse of a Sprugnoli array is then defined by 
$$(g,f_1,f_2)^{-1}=\left(1,\frac{x-f_1^e(\left(\overline{\sqrt{xf_2}}\right)^2)}{f_1^o(\left(\overline{\sqrt{xf_2}}\right)^2)},\frac{1}{x}\left(\overline{\sqrt{xf_2}}\right)^2\right)\cdot \left(\frac{1}{g},x,x\right).$$

\section{The mapping $\mathcal{F}_0 \times \mathcal{F}_0 \longrightarrow S$}
Given a pair of power series $(g,f) \in \mathcal{F}_0 \times \mathcal{F}_0$, we form the bivariate generating function 
$$B(x,y)=\frac{g(x)}{(1-x)(1+yx)}+\frac{xf(x)}{(1-x^2)(1-yx)},$$ and we designate by $M(g,f)$ the matrix with generating function $B(x,y)$. 
We have the following result.
\begin{proposition}
The matrix $M(g,f)$ is the Sprugnoli array 
$$(G, F_1, F_2)=\left(\frac{g(x)}{1-x}+\frac{xf(x)}{1-x^2}, \frac{x(xf(x)-(1+x)g(x))}{xf(x)+(1+x)g(x)},x\right).$$
\end{proposition}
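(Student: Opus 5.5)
The plan is to compare bivariate generating functions. A matrix is determined by its bivariate generating function. By the section on the Sprugnoli group, the array $(G,F_1,F_2)$ has bivariate generating function
$$\frac{G(x)(1+yF_1(x))}{1-y^2xF_2(x)}.$$
With $F_2(x)=x$ this becomes $\frac{G(x)(1+yF_1(x))}{1-y^2x^2}$. So it suffices to do two things: show that $B(x,y)$ can be rewritten in exactly this form, and check that the triple $(G,F_1,F_2)$ satisfies the membership conditions of $S$.

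First, I put the two terms of $B(x,y)$ over the common denominator $(1-x^2)(1-y^2x^2)$, using $(1-x)(1+yx)(1+x)(1-yx)=(1-x^2)(1-y^2x^2)$ and $(1-x^2)(1-yx)(1+yx)=(1-x^2)(1-y^2x^2)$. This gives
$$B(x,y)=\frac{(1+x)(1-yx)g(x)+(1+yx)xf(x)}{(1-x^2)(1-y^2x^2)}=\frac{\bigl(xf+(1+x)g\bigr)+yx\bigl(xf-(1+x)g\bigr)}{(1-x^2)(1-y^2x^2)}.$$
Second, I factor $xf(x)+(1+x)g(x)$ out of the numerator, which is legitimate because it is invertible (see below). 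The numerator becomes
$$\bigl(xf+(1+x)g\bigr)\left(1+y\,\frac{x(xf-(1+x)g)}{xf+(1+x)g}\right)=\bigl(xf+(1+x)g\bigr)\bigl(1+yF_1(x)\bigr).$$
Third, I note that
$$\frac{xf+(1+x)g}{1-x^2}=\frac{g}{1-x}+\frac{xf}{1-x^2}=G(x).$$
Hence $B(x,y)=\frac{G(x)(1+yF_1(x))}{1-y^2x^2}$, which is the bivariate generating function of $(G,F_1,x)$.

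The step needing most care is the membership check, i.e.\ that the expressions are genuine power series of the required orders. Since $g\in\mathcal{F}_0$, the series $xf+(1+x)g$ has constant term $g(0)\neq 0$, so it is invertible and $F_1$ is a well-defined power series. Its leading coefficient is $[x^1]F_1=-g(0)/g(0)=-1\neq 0$, so $F_1\in\mathcal{F}_1$. Also $G(0)=g(0)\neq 0$, so $G\in\mathcal{F}_0$. Finally, $F_2=x$ lies in $\mathcal{F}_1$ and is odd. Therefore $(G,F_1,F_2)\in S$. Since the two matrices have the same bivariate generating function, $M(g,f)$ equals the matrix of this Sprugnoli array.
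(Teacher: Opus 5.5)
Your proof is correct and is essentially the paper's argument: rewrite $B(x,y)$ over the denominator $1-y^2x^2$, collect the $y^0$ and $y^1$ parts, and factor out $G(x)=\frac{xf(x)+(1+x)g(x)}{1-x^2}$ to reach $\frac{G(x)(1+yF_1(x))}{1-y^2x^2}$, which is the generating function of $(G,F_1,x)$. Your explicit check that $G\in\mathcal{F}_0$, $F_1\in\mathcal{F}_1$ (with $[x^1]F_1=-1$) and $F_2=x$ odd is a small but welcome addition that the paper leaves implicit.
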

\begin{proof}
We have 
\begin{align*}
B(x,y)&=\frac{g(x)}{(1-x)(1+yx)}+\frac{xf(x)}{(1-x^2)(1-yx)}\\
&=\frac{g(x)(1-yx)}{(1-x)(1-y^2x^2)}+\frac{xf(x)(1+yx)}{(1-x^2)(1-y^2x^2)}\\
&=\frac{1}{1-y^2x^2} \left(\frac{g(x)}{1-x}+\frac{xf(x)}{1-x^2}+y\left(\frac{xf(x)}{1-x^2}-\frac{xg(x)}{1-x}\right)\right)\\
&=\frac{1}{1-y^2x^2}\left(\frac{g(x)}{1-x}+\frac{xf(x)}{1-x^2}\right)\left(1+\frac{yx\left(\frac{xf(x)}{1-x^2}-\frac{g(x)}{1-x}\right)}{\frac{xf(x)}{1-x^2}+\frac{g(x)}{1-x}}\right)\\
&=\frac{1}{1-y^2x^2}\left(\frac{g(x)}{1-x}+\frac{xf(x)}{1-x^2}\right)\left(1+yx\frac{xf(x)-(1+x)g(x)}{xf(x)+(1+x)g(x)}\right).\end{align*}
\end{proof}

\begin{example}
When $g(x)=\frac{1}{1-x-x^2}$, and $f(x)=1$, we obtain the Sprugnoli array 
$$\left(\frac{1+2x-x^2-x^3}{(1-x-x^2)(1-x^2)}, \frac{-x(1+x^2+x^3)}{1+2x-x^2-x^3},x\right),$$ that begins
\[
\begin{pmatrix}
1 & 0 & 0 & 0 & 0 & 0 & 0 \\
3 & -1 & 0 & 0 & 0 & 0 & 0 \\
4 & -1 & 1 & 0 & 0 & 0 & 0 \\
8 & -4 & 3 & -1 & 0 & 0 & 0 \\
12 & -6 & 4 & -1 & 1 & 0 & 0 \\
21 & -12 & 8 & -4 & 3 & -1 & 0 \\
33 & -19 & 12 & -6 & 4 & -1 & 1
\end{pmatrix}
\]
\end{example}
\begin{example}
If $g(x)=\frac{1}{1-x-x^2}$, the generating function of the Fibonacci numbers $F_{n+1}$ \seqnum{A000045}, and $f(x)=1$,  then $G(x)=\frac{g(x)}{1-x}+\frac{x}{1-x^2}$ is the generating function of the sequence 
$$a_n=\sum_{k=0}^n F_{k+1} + \frac{1-(-1)^n}{2}.$$ This sequence begins 
$$1, 3, 4, 8, 12, 21, 33, 55, 88, 144, 232,\ldots.$$ (See \seqnum{A052952} and \seqnum{A074331}). 
\end{example}
\begin{example} $B(x,1)$ is the generating function of the row sums of $M(g,f)$. We have 
$$B(x,1)=\frac{g(x)}{1-x^2}+\frac{xf(x)}{(1-x)(1-x^2)}.$$ 
When $g(x)=\frac{1}{1-x-x^2}$ and $f(x)=1$, this is the generating function of the sequence 
$$\sum_{k=0}^{\lfloor \frac{n}{2} \rfloor}F_{n-2k+1} + \sum_{k=0}^n \frac{1-(-1)^k}{2}.$$ This sequence begins 
$$1, 2, 4, 6, 10, 15, 24, 37, 59, 93, 149,\ldots.$$ This is \seqnum{A167270}.
In general, $B(x,1)$ is the generating function of the sequence with general term 
$$\sum_{k=0}^{\lfloor \frac{n}{2} \rfloor} g_{n-2k} + \sum_{k=0}^{n-1} \sum_{j=0}^{\lfloor \frac{k}{2} \rfloor} f_{k-2j}.$$ 
\end{example}
\begin{example} The square of the above matrix is given by 
\[
\begin{pmatrix}
1 & 0 & 0 & 0 & 0 & 0 & 0 \\
3 & -1 & 0 & 0 & 0 & 0 & 0 \\
4 & -1 & 1 & 0 & 0 & 0 & 0 \\
8 & -4 & 3 & -1 & 0 & 0 & 0 \\
12 & -6 & 4 & -1 & 1 & 0 & 0 \\
21 & -12 & 8 & -4 & 3 & -1 & 0 \\
33 & -19 & 12 & -6 & 4 & -1 & 1
\end{pmatrix}^2=\begin{pmatrix}
1 & 0 & 0 & 0 & 0 & 0 & 0 \\
0 & 1 & 0 & 0 & 0 & 0 & 0 \\
5 & 0 & 1 & 0 & 0 & 0 & 0 \\
0 & 5 & 0 & 1 & 0 & 0 & 0 \\
14 & 0 & 5 & 0 & 1 & 0 & 0 \\
0 & 14 & 0 & 5 & 0 & 1 & 0 \\
36 & 0 & 14 & 0 & 5 & 0 & 1
\end{pmatrix}.
\]
This is the Appell array $$\left(\frac{1-3x^4+x^6}{(1-x-x^2)(1+x-x^2)(1-x^2)^2},x\right).$$
\end{example}

\section{Squaring the Sprugnoli array $M(g,f)$}
We have $M(g,f)=(G, F_1, F_2)$ with 
\begin{align*}
G(x)&=\frac{g(x)}{1-x}+\frac{xf(x)}{1-x^2}\\
F_1(x)&=\frac{x(xf(x)-(1+x)g(x))}{xf(x)+(1+x)g(x)}\\
F_2(x)&=x.\end{align*}
Then 
\begin{align*}M(g,f)^2&=(G, F_1, x)\cdot (G, F_1, x)\\
&=\left((G, F_1, x)\cdot G, \frac{(G, F_1, x)\cdot GF_1}{(G, F_1, x)\cdot G}, x\right).\end{align*}
We have the following result.
\begin{proposition} When $f(x) \in \mathcal{F}_0$ is an even power series, then  
$$M(g,f)^2=\left(\frac{g(x)g(-x)+x^2 f(x)^2}{(1-x^2)^2},x\right),$$ where the array on the right hand side is an aerated Appell array.
\end{proposition}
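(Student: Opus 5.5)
The approach is to avoid the Sprugnoli product formula for $M(g,f)^2$, which involves quotients, and instead to find how $M(g,f)$ acts on an arbitrary power series $h(x)$. Once we know $M(g,f)\cdot h$ in closed form, the square is obtained by applying that action twice. If $M(g,f)^2\cdot h=\Phi(x)h(x)$ for every $h$, then by the fundamental theorem of Riordan arrays $M(g,f)^2$ is the Appell array $(\Phi(x),x)$.

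\textbf{Step 1: the action of $M(g,f)$.} Set
$$A(x)=\frac{g(x)}{1-x},\qquad C(x)=\frac{xf(x)}{1-x^2}.$$
By the previous proposition, $G=A+C$ and $GF_1=x(C-A)$, and $F_2=x$. The fundamental theorem of Sprugnoli arrays then gives
$$M(g,f)\cdot h=(A+C)h^e(x^2)+x(C-A)h^o(x^2).$$
We use the bisection identities $h^e(x^2)=\frac{h(x)+h(-x)}{2}$ and $xh^o(x^2)=\frac{h(x)-h(-x)}{2}$. Collecting terms gives
$$M(g,f)\cdot h=C(x)h(x)+A(x)h(-x).$$
The same formula can be read off directly from $B(x,y)$. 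Column $k$ of the first summand is $A(x)(-x)^k$, and column $k$ of the second summand is $C(x)x^k$. This gives a useful cross-check.

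\textbf{Step 2: apply the action twice.} Since $M(g,f)\cdot h=Ch+A\,h(-x)$, a second application gives
$$M(g,f)^2\cdot h=\bigl(C(x)^2+A(x)A(-x)\bigr)h(x)+A(x)\bigl(C(x)+C(-x)\bigr)h(-x).$$
Here $f$ even makes $C$ odd, so $C(x)+C(-x)=0$ and the $h(-x)$ term vanishes. This is the crucial use of the hypothesis, and the only step I expect to need care. Hence $M(g,f)^2=(\Phi,x)$ with
$$\Phi(x)=A(x)A(-x)+C(x)^2=\frac{g(x)g(-x)}{1-x^2}+\frac{x^2f(x)^2}{(1-x^2)^2}.$$

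\textbf{Step 3: aeration.} Both $g(x)g(-x)$ and $x^2f(x)^2$ are even power series, so $\Phi$ is even. Moreover $\Phi(0)=g(0)^2\neq 0$, so $\Phi(x)=\phi(x^2)$ with $\phi\in\mathcal{F}_0$, and $(\Phi,x)$ is an aerated Appell array.

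\textbf{Reconciling with the statement.} The computation produces $1-x^2$, not $(1-x^2)^2$, beneath $g(x)g(-x)$, so I would check this form against the worked example. For $g=\frac{1}{1-x-x^2}$ and $f=1$ we get
$$\Phi(x)=\frac{(1-x^2)+x^2(1-x-x^2)(1+x-x^2)}{(1-x-x^2)(1+x-x^2)(1-x^2)^2}=\frac{1-3x^4+x^6}{(1-x-x^2)(1+x-x^2)(1-x^2)^2},$$
which is exactly the array in the example preceding this section. So the displayed formula should be read with $g(x)g(-x)$ over $1-x^2$. Equivalently, in the stated form over $(1-x^2)^2$, the first numerator term is $(1-x^2)g(x)g(-x)$.
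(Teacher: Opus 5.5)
Your proof is correct, and it takes a different route from the paper.

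\textbf{The paper's route.} The paper stays inside the Sprugnoli formalism. It writes $M(g,f)^2=\bigl((G,F_1,x)\cdot G,\ \frac{(G,F_1,x)\cdot GF_1}{(G,F_1,x)\cdot G},\ x\bigr)$ and computes the four bisections $G^e(x^2)$, $G^o(x^2)$, $(GF_1)^e(x^2)$, $(GF_1)^o(x^2)$ using $f(-x)=f(x)$. It then asserts two things after simplifications of rational expressions in $g(x)$, $g(-x)$, $f(x)$ that are not shown:
\begin{itemize}
\item the first component equals $\frac{g(x)g(-x)(1-x^2)+x^2f(x)^2}{(1-x^2)^2}$;
\item $(G,F_1,x)\cdot GF_1=x\,(G,F_1,x)\cdot G$.
\end{itemize}

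\textbf{Your route.} You collapse the whole action of $M(g,f)$ into the operator $h\mapsto C(x)h(x)+A(x)h(-x)$. As your cross-check shows, this can be read off column by column from $B(x,y)$, so your argument does not even depend on the Sprugnoli structure of $M(g,f)$. You then just compose the operator with itself.

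\textbf{What each buys.} Your route removes all the quotients. It also makes the role of evenness transparent: it is exactly the vanishing of the cross term $A(x)\bigl(C(x)+C(-x)\bigr)h(-x)$. Since $A(0)=g(0)\neq 0$, the same formula shows the hypothesis is also necessary. If $f$ is not even, columns $0$ and $1$ of $M(g,f)^2$ are $\Phi+\Psi$ and $(\Phi-\Psi)x$ with $\Psi\neq 0$, so the square is not Appell. The involution condition of the next section, $A(x)A(-x)+C(x)^2=1$ with $C$ odd, also becomes immediate. What the paper's route buys is that the answer comes out directly as a Sprugnoli triple $(\Phi,x,x)$, in keeping with its group-theoretic framing. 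The price is heavy algebra that is only sketched.

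\textbf{The formula.} Your reconciliation is also right. The paper's own proof obtains $\frac{g(x)g(-x)(1-x^2)+x^2f(x)^2}{(1-x^2)^2}$ as the first component, and the subsequent corollary uses this form. The worked examples agree with it, including the $g=\frac{1-rx}{1-x}$, $f=1$ case, whose entries are $3-r^2$, $5-2r^2$, $7-3r^2$. So the missing factor $1-x^2$ in the displayed statement, and in the last line of the paper's proof, is a typo.
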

\begin{proof}
We begin by calculating $(G, F_1, x)\cdot G$. By the fundamental theorem of Sprugnoli arrays, we have 
$$(G, F_1, x)\cdot G = G(x) G^e(x^2)+G(x)F_1(x)G^o(x^2).$$
Now 
\begin{align*}
G^e(x^2)&=\left(\frac{g(x)}{1-x}+\frac{xf(x)}{1-x^2}+\frac{g(-x)}{1+x}+\frac{-xf(-x)}{1-x^2}\right)/2\\
&=\left(\frac{g(x)}{1-x}+\frac{xf(x)}{1-x^2}+\frac{g(-x)}{1+x}-\frac{xf(x)}{1-x^2}\right)/2\quad (f(-x)=f(x))\\
&=\left(\frac{g(x)}{1-x}+\frac{g(-x)}{1+x}\right)/2.\end{align*}
Also,
\begin{align*}
G^o(x^2)&=\frac{1}{2x}\left(\frac{g(x)}{1-x}+\frac{xf(x)}{1-x^2}-\frac{g(-x)}{1+x}+\frac{xf(x)}{1-x^2}\right)\\
&=\frac{1}{2x}\left(\frac{g(x)}{1-x}-\frac{g(-x)}{1+x}+\frac{2xf(x)}{1-x^2}\right). \end{align*}
Then 
\begin{align*}
(G, F_1, x)\cdot G &= G(x) G^e(x^2)+G(x)F_1(x)G^o(x^2)\\
&=\frac{G(x)}{2}\left(\frac{g(x)}{1-x}+\frac{g(-x)}{1+x}+\left(\frac{xf(x)-(1+x)g(x)}{xf(x)+(1+x)g(x)}\right)\left(\frac{g(x)}{1-x}-\frac{g(-x)}{1+x}+\frac{2xf(x)}{1-x^2}\right)\right).\end{align*}
Simplifying this, we find that 
$$(G, F_1, x) \cdot G=\frac{g(x)g(-x)(1-x^2)+x^2f(x)^2}{(1-x^2)^2}.$$
We must now show that 
$$ \frac{(G, F_1, x)\cdot GF_1}{(G, F_1, x)\cdot G}=x.$$
Equivalently, we must show that 
$$(G, F_1, x)\cdot GF_1=x(G, F_1, x)\cdot G.$$ 
Now 
$$(G, F_1,x) \cdot GF_1 =G (GF_1)^e(x^2)+GF_1 (GF_1)^o(x^2).$$
Again, using the hypothesis that $f(-x)=f(x)$, we calculate that 
\begin{scriptsize}
$$(GF_1)^e(x^2)=\frac{x}{2}\left(\left(\frac{g(x)}{x}+\frac{xf(x)}{1-x^2}\right)\left(\frac{xf(x)-(1+x)g(x)}{xf(x)+(1+x)g(x)}\right)-\left(\frac{g(-x)}{1+x}-\frac{xf(x)}{1-x^2}\right)\left(\frac{xf(x)+(1-x)g(-x)}{xf(x)-(1-x)g(-x)}\right)\right).$$
\end{scriptsize} and
\begin{scriptsize}
$$(GF_1)^o(x^2)=\frac{1}{2x}\left(x\left(\frac{g(x)}{1-x}+\frac{xf(x)}{1-x^2}\right)\frac{xf(x)-(1+x)g(x)}{xf(x)+(1+x)g(x)}+x\left(\frac{g(-x)}{1+x}-\frac{xf(x)}{1-x^2}\right)\frac{xf(x)+(1-x)g(-x)}{xf(x)-(1-x)g(-x)}\right).$$ 
\end{scriptsize}
With these expressions, we find that 
$$(G, F_1, x)\cdot GF_1=x(G, F_1, x)\cdot G$$ as required. Thus 
$$M(g,f)^2=\left(\frac{g(x)g(-x)+x^2 f(x)^2}{(1-x^2)^2},x,x\right)=\left(\frac{g(x)g(-x)+x^2 f(x)^2}{(1-x^2)^2},x\right).$$ 
\end{proof}
\begin{corollary} Let $(h(x^2),x)$ be an element of the aerated Appell subgroup of the Riordan group. If $h(x^2)$ can be expressed as
$$h(x^2)=\frac{g(x)g(-x)(1-x^2)+x^2 f(x)^2}{(1-x^2)^2}$$ for $(g,f) \in \mathcal{F}_0 \times \mathcal{F}_0$ with $f(x)$ even, then a square root of the aerated Appell array $(h(x^2),x)$ is given by the Sprugnoli array $M(g,f)$.
\end{corollary}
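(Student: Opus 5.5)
The plan is to read the corollary off the preceding Proposition, specifically off the computation carried out in its proof rather than the displayed formula in its statement. Let $(g,f)\in\mathcal{F}_0\times\mathcal{F}_0$ with $f$ even, and set $M=M(g,f)=(G,F_1,x)$, with $G$ and $F_1$ as given at the start of the section.

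The proof of the Proposition computes the square via the product rule for Sprugnoli arrays,
$$M^2=\left((G,F_1,x)\cdot G,\ \frac{(G,F_1,x)\cdot GF_1}{(G,F_1,x)\cdot G},\ x\right).$$
It establishes two facts:
$$(G,F_1,x)\cdot G=\frac{g(x)g(-x)(1-x^2)+x^2f(x)^2}{(1-x^2)^2},$$
and
$$(G,F_1,x)\cdot GF_1=x\,(G,F_1,x)\cdot G.$$
Hence $M^2=(H,x,x)$, where $H$ is the displayed right-hand side.

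I will use the form with the factor $(1-x^2)$, which is the one in the corollary, because it is what the fundamental theorem of Sprugnoli arrays actually yields. As a sanity check, take $g=1$ and formally $f=0$. Then $M=(1/(1-x),-x,x)$ acts by $h\mapsto h(-x)/(1-x)$, so $M$ is the Riordan involution-type array $(1/(1-x),-x)$. Its square is $(1/(1-x^2),x)$, which matches $\frac{1-x^2}{(1-x^2)^2}$. I expect this to be the main point needing care: the Proposition's displayed statement omits the factor $(1-x^2)$ on $g(x)g(-x)$. So I would cite the expression obtained in the proof, or re-derive it quickly from the bisections $G^e(x^2)$ and $G^o(x^2)$ computed there. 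That re-derivation is a routine rational simplification once $f(-x)=f(x)$ is used.

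Next, I recall from the section on the Sprugnoli group that $(H,x,x)$ has bivariate generating function $\frac{H(1+yx)}{1-y^2x^2}=\frac{H}{1-yx}$. So it coincides with the Appell array $(H,x)$. Moreover $H$ is even, since $g(x)g(-x)$, $1-x^2$ and $x^2f(x)^2$ are all even. Thus $H=\tilde h(x^2)$ for some power series $\tilde h$, and $(H,x)$ lies in the aerated Appell subgroup. Its constant term is $g(0)^2\neq 0$, so $\tilde h\in\mathcal{F}_0$.

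Finally, the hypothesis of the corollary is exactly that $h(x^2)=H(x)$. Since a Riordan array is determined by its pair of power series, $M(g,f)^2=(H,x)=(h(x^2),x)$. Hence the Sprugnoli array $M(g,f)$, which is a Sprugnoli array by the Proposition of the previous section, is a square root of $(h(x^2),x)$, as claimed.
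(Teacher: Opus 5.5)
Your proposal is correct and matches the paper's route: the corollary is read off from the Proposition, since $M(g,f)^2=\bigl((G,F_1,x)\cdot G,\,x,\,x\bigr)=(H,x)$ with $H$ even and $H(0)=g(0)^2\neq 0$. You are also right that the factor $(1-x^2)$ on $g(x)g(-x)$ belongs in $H$. The Proposition's proof gives it, and so does the case $g=f=\frac{1}{1-x^2}$, which yields $\left(\frac{1}{(1-x^2)^4},x\right)$; the Proposition's displayed formula is a typo.
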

\begin{example} Let $h(x^2)=\frac{1}{1-x^2}$. This means that we are looking for a square root of the aerated Appell array $\left(\frac{1}{1-x^2},x\right)$. Then assuming that $f(x)=1$, we wish to find $g(x)$ such that
$$\frac{g(x)g(-x)(1-x^2)+x^2}{(1-x^2)^2}=\frac{1}{1-x^2}.$$
Solving for $g(x)g(-x)$, we find that 
$$g(x)g(-x)=\frac{1-2x^2}{1-x^2}=\frac{(1-\sqrt{2}x)(1+\sqrt{2}x)}{(1-x)(1+x)}.$$ 
This gives us $4$ possibilities for $g(x)$. We take, for instance, $g(x)=\frac{1-\sqrt{2}x}{1-x}$. 
Taking the general case of $g(x)=\frac{1-rx}{1-x}$ initially, we find that $M(g,f)=M(g,1)$ begins 
\[
\begin{pmatrix}
1 & 0 & 0 & 0 & 0 & 0 & 0 \\
3 - r & -1 & 0 & 0 & 0 & 0 & 0 \\
3 - 2r & r - 1 & 1 & 0 & 0 & 0 & 0 \\
5 - 3r & 2r - 3 & 3 - r & -1 & 0 & 0 & 0 \\
5 - 4r & 3(r - 1) & 3 - 2r & r - 1 & 1 & 0 & 0 \\
7 - 5r & 4r - 5 & 5 - 3r & 2r - 3 & 3 - r & -1 & 0 \\
7 - 6r & 5(r - 1) & 5 - 4r & 3(r - 1) & 3 - 2r & r - 1 & 1
\end{pmatrix}.
\]
The square of this matrix then begins

\[
\begin{pmatrix}
1 & 0 & 0 & 0 & 0 & 0 & 0 \\
0 & 1 & 0 & 0 & 0 & 0 & 0 \\
3 - r^{2} & 0 & 1 & 0 & 0 & 0 & 0 \\
0 & 3 - r^{2} & 0 & 1 & 0 & 0 & 0 \\
5 - 2r^{2} & 0 & 3 - r^{2} & 0 & 1 & 0 & 0 \\
0 & 5 - 2r^{2} & 0 & 3 - r^{2} & 0 & 1 & 0 \\
7 - 3r^{2} & 0 & 5 - 2r^{2} & 0 & 3 - r^{2} & 0 & 1
\end{pmatrix}.
\]
Now letting $r=\sqrt{2}$, we obtain the matrix $M(g,f)^2=M(g,1)^2$ that begins
\[
\begin{pmatrix}
1 & 0 & 0 & 0 & 0 & 0 & 0 \\
0 & 1 & 0 & 0 & 0 & 0 & 0 \\
1 & 0 & 1 & 0 & 0 & 0 & 0 \\
0 & 1 & 0 & 1 & 0 & 0 & 0 \\
1 & 0 & 1 & 0 & 1 & 0 & 0 \\
0 & 1 & 0 & 1 & 0 & 1 & 0 \\
1 & 0 & 1 & 0 & 1 & 0 & 1
\end{pmatrix}.
\]
Thus \[
\begin{pmatrix}
1 & 0 & 0 & 0 & 0 & 0 & 0 \\
3 - \sqrt{2} & -1 & 0 & 0 & 0 & 0 & 0 \\
3 - 2\sqrt{2} & \sqrt{2} - 1 & 1 & 0 & 0 & 0 & 0 \\
5 - 3\sqrt{2} & 2\sqrt{2} - 3 & 3 - \sqrt{2} & -1 & 0 & 0 & 0 \\
5 - 4\sqrt{2} & 3\sqrt{2} - 3 & 3 - 2\sqrt{2} & \sqrt{2} - 1 & 1 & 0 & 0 \\
7 - 5\sqrt{2} & 4\sqrt{2} - 5 & 5 - 3\sqrt{2} & 2\sqrt{2} - 3 & 3 - \sqrt{2} & -1 & 0 \\
7 - 6\sqrt{2} & 5\sqrt{2} - 5 & 5 - 4\sqrt{2} & 3\sqrt{2} - 3 & 3 - 2\sqrt{2} & \sqrt{2} - 1 & 1
\end{pmatrix}^2\]
\[\quad\quad
=\begin{pmatrix}
1 & 0 & 0 & 0 & 0 & 0 & 0 \\
0 & 1 & 0 & 0 & 0 & 0 & 0 \\
1 & 0 & 1 & 0 & 0 & 0 & 0 \\
0 & 1 & 0 & 1 & 0 & 0 & 0 \\
1 & 0 & 1 & 0 & 1 & 0 & 0 \\
0 & 1 & 0 & 1 & 0 & 1 & 0 \\
1 & 0 & 1 & 0 & 1 & 0 & 1
\end{pmatrix}.
\]
We have 
$$\left(\frac{1-(\sqrt{2}-2)x-(\sqrt{2}+1)x^2}{(1-x)^2(1+x)}, -x\frac{1-(\sqrt{2}-1)x-(2\sqrt{2}-1)x^2-\sqrt{2}x^3}{1-(\sqrt{2}-3)x-(2\sqrt{2}-1)x^2-\sqrt{2}x^3},x \right)^2=\left(\frac{1}{1-x^2},x\right).$$
As we are in a group, the square of the inverse of this array will be equal to $(1-x^2,x)$. 
\end{example}
\section{Involutions in the Sprugnoli group}
We have $(1,x,x)=(1,x)$. Clearly, the identity matrix is an element of the aerated Appell group. Thus in order to find involutions in the Sprugnoli groups, we can seek $(g(x),f(x)) \in \mathcal{F}_0 \times \mathcal{F}_0$ with $f(x)$ even, such that 
$$\frac{g(x)g(-x)(1-x^2)+x^2f(x)^2}{(1-x^2)^2}=1.$$
\begin{example} We take $f(x)=1$ and solve the equation 
$$\frac{g(x)g(-x)(1-x^2)+x^2}{(1-x^2)^2}=1.$$
We find that 
$$g(x)g(-x)=(1-x^2)-\frac{x^2}{1-x^2}=\left(\sqrt{1-x^2}-\frac{x}{\sqrt{1-x^2}}\right)\left(\sqrt{1-x^2}+\frac{x}{\sqrt{1-x^2}}\right).$$
Thus one solution is given by 
$$g(x)=\sqrt{1-x^2}-\frac{x}{\sqrt{1-x^2}}.$$
Hence we obtain the involution in the Sprugnoli group given by
$$\left(\frac{x(1-x)-(1-x-x^2)\sqrt{1-x^2}}{(1-x)(1-x^2)}, -x\frac{(1+x)(1-x-x^2)-x\sqrt{1-x^2}}{(1+x)(1-x-x^2)+x\sqrt{1-x^2}},x\right)^2=(1,x,x).$$
\end{example}
\begin{example} We take $f(x)=1-x^2$. Then we wish to solve 
$$\frac{g(x)g(-x)(1-x^2)+x^2(1-x^2)^2}{(1-x^2)^2}=1.$$
We find that $$g(x)g(-x)=(1+x)^2(1-x)^2.$$
Thus we take $g(x)=(1+x)^2$ along with $f(x)=1-x^2$. 
Then $M(g,f)=\left(\frac{1+3x}{1-x}, -x\frac{1+x+2x^2}{1+3x}, x\right)$ satisfies $M(g,f)^2=I$. 
\[
\begin{pmatrix}
1 & 0 & 0 & 0 & 0 & 0 & 0 \\
4 & -1 & 0 & 0 & 0 & 0 & 0 \\
4 & -2 & 1 & 0 & 0 & 0 & 0 \\
4 & -4 & 4 & -1 & 0 & 0 & 0 \\
4 & -4 & 4 & -2 & 1 & 0 & 0 \\
4 & -4 & 4 & -4 & 4 & -1 & 0 \\
4 & -4 & 4 & -4 & 4 & -2 & 1
\end{pmatrix}^2=
\begin{pmatrix}
1 & 0 & 0 & 0 & 0 & 0 & 0 \\
0 & 1 & 0 & 0 & 0 & 0 & 0 \\
0 & 0 & 1 & 0 & 0 & 0 & 0 \\
0 & 0 & 0 & 1 & 0 & 0 & 0 \\
0 & 0 & 0 & 0 & 1 & 0 & 0 \\
0 & 0 & 0 & 0 & 0 & 1 & 0 \\
0 & 0 & 0 & 0 & 0 & 0 & 1
\end{pmatrix}.
\]
More generally, we can show that 
$$\left(\frac{1+rx}{1-x}, \frac{-x(1+x+(r-1)x^2)}{1+rx}, x\right)^2=I.$$
\end{example}
\begin{example} We now let $f(x)=1-4x^2$.  Then we wish to solve
$$\frac{g(x)g(-x)(1-x^2)+x^2(1-4x^2)^2}{(1-x^2)^2}=1.$$
We find 
$$g(x)g(-x)=\frac{1-3x^2+9x^4-16x^6}{1-x^2}=\frac{(1+x-x^2-4x^3)(1-x-x^2+4x^3)}{1-x^2}.$$ 
Taking $g(x)=\frac{1+x-x^2-4x^3}{1+x}$ now gives us the Sprugnoli array 
$$M(g,f)=\left(\frac{1+2x-x^2+8x^3}{1-x^2}, \frac{-x(1-x^2)}{1+2x-x^2+8x^3}, x\right).$$ This gives 
\[
\begin{pmatrix}
1 & 0 & 0 & 0 & 0 & 0 & 0 \\
2 & -1 & 0 & 0 & 0 & 0 & 0 \\
0 & 0 & 1 & 0 & 0 & 0 & 0 \\
-6 & 0 & 2 & -1 & 0 & 0 & 0 \\
0 & 0 & 0 & 0 & 1 & 0 & 0 \\
-6 & 0 & -6 & 0 & 2 & -1 & 0 \\
0 & 0 & 0 & 0 & 0 & 0 & 1
\end{pmatrix}^2=\begin{pmatrix}
1 & 0 & 0 & 0 & 0 & 0 & 0 \\
0 & 1 & 0 & 0 & 0 & 0 & 0 \\
0 & 0 & 1 & 0 & 0 & 0 & 0 \\
0 & 0 & 0 & 1 & 0 & 0 & 0 \\
0 & 0 & 0 & 0 & 1 & 0 & 0 \\
0 & 0 & 0 & 0 & 0 & 1 & 0 \\
0 & 0 & 0 & 0 & 0 & 0 & 1
\end{pmatrix}.
\]
\end{example}
\section{A Riordan group interpretation of the special mapping}
We have introduced a mapping
$$\mathcal{F}_0 \times \mathcal{F}_0 \longrightarrow \mathbb{C}[[x,y]]$$ 
$$(g,f) \mapsto \frac{g(x)}{(1-x)(1+yx)}+\frac{xf(x)}{(1-x^2)(1-yx)}.$$
The presence of $xf(x)$ on the right allows us to view this mapping in an alternative way. The Riordan group satisfies 
$$R = \mathcal{F}_0 \ltimes \mathcal{F}_1,$$ and so we can define the map 
$$R = \mathcal{F}_0 \ltimes \mathcal{F}_1 \longrightarrow \mathbb{C}[[x,y]],$$
$$ (g, xf) \mapsto B(x,y)=\frac{g(x)}{(1-x)(1+yx)}+\frac{xf(x)}{(1-x^2)(1-yx)}.$$
Now since $B(x,y)$ is the generating function of a Sprugnoli array, we have in fact defined a mapping from the Riordan group to the Sprugnoli group. 
$$ R \longrightarrow S$$
$$ (g,xf) \mapsto \left(\frac{g(x)}{1-x}+\frac{xf(x)}{1-x^2}, x\left(\frac{xf(x)-(1+x)g(x)}{xf(x)+(1+x)g(x)}\right), x\right).$$ 
Our main result now says that if the element $(g,xf) \in R$ is such that $xf$ is odd, then the image of $(g,xf)$ in $S$ has its square in the aerated Appell subgroup of $R$.
\section{Conclusions}
We have shown that Sprugnoli arrays can play an important role in determining the root structure of some aerated Appell arrays. This is an interesting interplay between the Sprugnoli group and the Riordan group. We can ask if this can be generalized: might other Riordan arrays have square roots that are Sprugnoli arrays? We have also found a method for generating involutions in the Sprugnoli group. The crucial role is played by the special mapping from $R$ to $S$. One can ask whether this mapping has other special characteristics, or are there other mappings from $R$ to $S$ that could lead to other insights into elements of the Riordan group itself.

\bigskip
\hrule
\bigskip
\noindent 2020 {\it Mathematics Subject Classification}:
Primary 15B36; Secondary 05A15, 11B83, 11C20, 15A15.

\noindent \emph{Keywords:} Riordan array, Appell subgroup, Sprugnoli group, generating function, square root, involution.

\bigskip
\hrule
\bigskip
\noindent (Concerned with sequences
\seqnum{A000045},
\seqnum{A052952},
\seqnum{A074331},
\seqnum{A085478},
\seqnum{A122367} and
\seqnum{A167270}.)

\end{document}